\theoremstyle{plain}
\newtheorem{theorem}{Theorem}
\newtheorem{corollary}{Corollary}
\newtheorem{proposition}{Proposition}
\theoremstyle{definition}
\newtheorem{definition}{Definition}
\theoremstyle{remark}
\newtheorem{remark}{Remark}
\DeclareMathOperator{\dom}{dom}
\DeclareMathOperator{\argmin}{arg\,min}
\author{M.V. Dolgopolik}
\title{Smooth Exact Penalty Functions: A General Approach}
\begin{document}

\maketitle

\begin{abstract}
In this article we present a new perspective on the smooth exact penalty function proposed by Huyer and Neumaier
that is becoming more and more popular tool for solving constrained optimization problems. Our approach to Huyer and
Neumaier's exact penalty function allows one to apply previously unused tools (namely, parametric
optimization) to the study of the exactness of this function. We give a new simple proof of the local exactness of
Huyer and Neumair's penalty function that significantly generalizes all similar results existing in the literature. We
also obtain new necessary and sufficient conditions for the global exactness of this penalty function.
\end{abstract}

\section{Introduction}

The method of exact penalty functions \cite{DiPilloGrippo, BurkeExPen, Demyanov} is a powerful tool for solving various
constrained optimization problems. However, as it is well-known, exact penalty functions are usually nonsmooth, even in
the case when the original problem is smooth. This obstacle makes it impossible to apply (without some transformation
of the problem) well-developed and extensively studied methods of smooth unconstrained optimization to minimization of
an exact penalty function.

Huyer and Neumaier in \cite{HuyerNeumaier} proposed a new approach to exact penalization that allows one to overcome
nonsmoothness of exact penalty functions. Namely, let the original constrained optimization problem have the form
\begin{equation} \label{NonlinearProgr}
  \min f(x) \quad \text{subject to} \quad F(x) = 0, \quad x \in [\underline{x}, \overline{x}],
\end{equation}
where $f \colon \mathbb{R}^n \to \mathbb{R}$ and $F \colon \mathbb{R}^n \to \mathbb{R}^m$ are smooth functions,
$\underline{x}, \overline{x} \in \mathbb{R}^n$ are given vectors, and
$$
  [\underline{x}, \overline{x}] = \big\{ x = (x_1, \ldots, x_n) \in \mathbb{R}^n \mid 
  \underline{x}_i \le x_i \le \overline{x}_i \quad \forall i \in \{ 1, \ldots, n \} \big\}.
$$
The new approach consists in the introduction of an additional variable $\varepsilon \ge 0$ in the following way. 
Choose $w \in \mathbb{R}^m$, and note that problem (\ref{NonlinearProgr}) is equivalent to the problem
\begin{equation} \label{AugmProblem}
  \min_{x, \varepsilon} f(x) \quad \text{subject to} \quad F(x) = \varepsilon w, \quad \varepsilon = 0, \quad 
  x \in [\underline{x}, \overline{x}].
\end{equation}
Then one defines the new ``smooth'' penalty function for the augmented problem (\ref{AugmProblem}) as follows
\begin{equation} \label{SmoothExPenFuncNLP}
  F_{\lambda}(x, \varepsilon) = \begin{cases}
    f(x), & \text{ if } \varepsilon = \Delta (x, \varepsilon) = 0, \\
    f(x) + \frac{1}{2 \varepsilon} \frac{\Delta(x, \varepsilon)}{1 - q \Delta(x, \varepsilon)} + \lambda
    \beta(\varepsilon), & \text{ if } \varepsilon > 0, \: \Delta(x, \varepsilon) < q^{-1}, \\
    + \infty, &  \text{otherwise}.
  \end{cases}
\end{equation}
where $\lambda \ge 0$ is the penalty parameter, $\Delta(x, \varepsilon) = \| F(x) - \varepsilon w \|^2$ is the
constraint violation measure, $\beta \colon [0, \overline{\varepsilon}] \to [0, +\infty)$ with $\beta(0) = 0$ is the
penalty term, $q > 0$ and $\overline{\varepsilon} > 0$ are some prespecified thresholds. Finally, one replaces the
augmented problem (\ref{AugmProblem}) with the penalized problem 
\begin{equation} \label{FirstPenProblem}
  \min_{x, \varepsilon} F_{\lambda}(x, \varepsilon) \quad \text{subject to} \quad 
  (x, \varepsilon) \in [\underline{x}, \overline{x}] \times [0, \overline{\varepsilon}].
\end{equation}
Observe that the penalty function $F_{\lambda}(x, \varepsilon)$ is smooth for any $\varepsilon \in (0,
\overline{\varepsilon)}$ and $x$ such that $0 < \Delta(x, \varepsilon) < q^{-1}$ provided the function $\beta$ is smooth
on $(0, \overline{\varepsilon})$. Furthermore, it was proven in \cite{HuyerNeumaier} that under a standard assumption
(namely, constraint qualification) the penalty function $F_{\lambda}(x, \varepsilon)$ is locally exact. In other
words, $(x^*, \varepsilon^*)$ is a point of local minimum of problem (\ref{FirstPenProblem}) if and only if 
$\varepsilon^* = 0$ and $x^*$ is a point of local minimum of problem (\ref{NonlinearProgr}). Consequently,
one can apply methods of smooth unconstrained minimization to penalized problem (\ref{FirstPenProblem}) in order to
find a solution of initial constrained optimization problem (\ref{NonlinearProgr}).

Later on, Huyer and Neumaier's approach was generalized \cite{WangMaZhou, Bingzhuang} and successfully applied to
various constrained optimization problems \cite{MaLiYiu, LinWuYu}, including some optimal control problems
\cite{LiYu, JianLin, LinLoxton}. However, it should be noted that the existing proofs of the exactness of the smooth
penalty function (\ref{SmoothExPenFuncNLP}) and its various generalizations are quite complicated, and overburdened by
technical details that overshadow the understanding of the technique of smooth exact penalty functions. Also, 
the question of when problem (\ref{NonlinearProgr}) is actually equivalent to problem (\ref{FirstPenProblem})
in terms of globally optimal solutions (in this case the penalty function $F_{\lambda}(x, \varepsilon)$ is called exact)
has not been discussed in the literature.

The aim of this article is to present a new perspective on the method of smooth exact penalty functions proposed by
Huyer and Neumaier. This perspective allows one to apply previously unused tools (namely, parametric optimization) to
the study and construction of smooth exact penalty functions. It also helped us to essentially simplify the proof of
exactness of these functions. Another aim of this articles is to provide first necessary and sufficient conditions for
Huyer and Neumaier's penalty function to be (globally) exact.

The paper is organised as follows. In Section~\ref{SctnMotivation} we describe a new approach to smooth exact penalty
functions. Some general results that draw a connection between the exactness of the new penalty function and some
properties of a perturbed optimization problem are presented in Section~\ref{SctnPerturbation}. 
In Section~\ref{Sctn_ProofOfExactness} we give a new simple proof of the local exactness of the new penalty function
that significantly generalizes all results on the local exactness of Huyer and Neumaier's penalty functions existing in
the literature. We also provide new simple sufficient conditions for this penalty function to be globally exact.

\section{How to Construct a Smooth Exact Penalty Function?}
\label{SctnMotivation}

Two main approaches are usually used for the study of exact penalty functions: the direct approach that is based on the
use of error bounds and metric regularity, and the indirect one that relies on the analysis of a perturbed optimization
problem. In the indirect approach \cite{Clarke, ClarkeBook, Burke, Uderzo}, a perturbation of the initial problem is
introduced as a tool for the study of a penalty function that has already been defined. However, one can introduce
perturbation in order to construct a penalty function.

Namely, define the perturbed objective function for problem (\ref{NonlinearProgr}) as follows:
$$
  g(x, \mu) = \begin{cases}
    f(x), & \text{ if } \mu = \Delta (x, \mu) = 0, \\
    f(x) + \frac{1}{\mu} \frac{\Delta(x, \mu)}{1 - q \Delta(x, \mu)}, &
    \text{ if } \mu > 0, \: \Delta(x, \mu) < q^{-1}, \\
    + \infty, & \text{otherwise},
  \end{cases}
$$
where $\mu \ge 0$ is a perturbation parameter. Consider the perturbed optimization problem
\begin{equation} \label{PerturbedProblem}
  \min_x g(x, \mu) \quad \text{subject to} \quad x \in [\underline{x}, \overline{x}].
\end{equation}
It is clear that the problem above with $\mu = 0$ is equivalent to problem (\ref{NonlinearProgr}). Moreover, the
penalization of the constraint $F(x) = 0$ is achieved via the introduced perturbation. As a second step, note that 
the perturbed problem is equivalent to the problem
$$
  \min_{x, \mu} g(x, \mu) \quad \text{subject to} \quad x \in [\underline{x}, \overline{x}], \quad \mu = 0.
$$
Finally, inroduce a penalty function for the problem above that penalizes only the constraint on the perturbation
parameter, i.e. $\mu = 0$. This penalty function has the form
\begin{equation} \label{PenaltyFunctionForPertProb}
  F_{\lambda}(x, \mu) = g(x, \mu) + \lambda \beta(\mu),
\end{equation}
and it is smooth for any $\mu \in (0, \overline{\varepsilon})$ and $x \in \mathbb{R}^n$ such that 
$0 < \Delta(x, \mu) < q^{-1}$. Thus, the fact that the nonlinear constraints are taken into account via perturbation
(not penalization), and the fact that the penalty term is constructed only for a simple one dimensional constraint on
the perturbation parameter helped us to avoid nonsmoothness that usually arises due to the restrictive requirements on
the penalty term of an exact penalty function.

In the following section, we develop the approach discussed above in the general case, and demonstrate that the
exactness of penalty function (\ref{PenaltyFunctionForPertProb}) is directly connected with some properties of perturbed
problem (\ref{PerturbedProblem}).

\section{Exact Penalty Function for a Perturbed Optimization Problem}
\label{SctnPerturbation}

Let $X$ be a topological space, $f \colon X \to \mathbb{R} \cup \{ + \infty \}$ be a given function, and 
$M$, $A \subset X$ be nonempty sets such that $M \cap A \ne \emptyset$. Hereafter, we study the following optimization
problem:
$$
  \min f(x) \quad \text{subject to} \quad x \in M, \quad x \in A.	\eqno{(\mathcal{P})}
$$
Denote by $\Omega = M \cap A$ the set of feasible points of this problem. Denote also  $\mathbb{R}_+ = [0, +\infty)$ and
$\dom f = \{ x \in X \mid f(x) < +\infty \}$. We suppose that $f$ is bounded below on $\Omega$.

Introduce a metric space of perturbation parameters $(P, d)$, and a perturbed objective function 
$g \colon X \times P \to \mathbb{R} \cup \{ + \infty \}$ such that there exists $\mu_0 \in P$ for which the following
conditions are satisfied:
\begin{enumerate}
\item{$g(x, \mu_0) = f(x)$ for any $x \in \Omega$ (\textbf{consistency condition});}

\item{$\argmin_{x \in A} g(x, \mu_0) = \argmin_{x \in \Omega} f(x)$ (\textbf{exact penalization condition}).}
\end{enumerate}
With the use of the exact penalization condition one gets that the problem ($\mathcal{P}$) is equivalent (in terms of
globally optimal solutions) to the following optimization problem:
\begin{equation} \label{EquivProblem}
  \min_{x, \mu} g(x, \mu) \quad \text{subject to} \quad x \in A, \quad \mu = \mu_0.
\end{equation}
Furthermore, the consistency condition guarantees that if $(x_0, \mu_0)$ with $x_0 \in \Omega$ is a point of local
minimum of the above problem, then $x_0$ is a point of local minimum of the problem ($\mathcal{P}$).

We apply the exact exact penalization technique to treat the problem (\ref{EquivProblem}). Namely, choose a function
$\beta \colon \mathbb{R}_+ \to \mathbb{R}_+ \cup \{ +\infty \}$ such that $\beta(t) = 0$ iff $t = 0$. For any 
$\lambda \ge 0$ define \textit{the penalty function}
$$
  F_{\lambda}(x, \mu) = g(x, \mu) + \lambda \beta(d(\mu, \mu_0))
$$
and consider the following penalized problem
\begin{equation} \label{PenalProblem}
  \min_{x, \mu} F_{\lambda}(x, \mu) \quad \text{subject to} \quad x \in A.
\end{equation}
Observe that the function $\lambda \to F_{\lambda}(x, \mu)$ is non-decreasing. 

Our aim is to study a relation between local/global minimizers of the initial problem ($\mathcal{P}$) and
local/global minimizers of the problem (\ref{PenalProblem}) in the context of the theory of exact penalty functions.
To this end, recall the definition of exact penalty function.

\begin{definition}
Let $x^* \in \dom f$ be a point of local minimum of the problem ($\mathcal{P}$). The penalty function $F_{\lambda}$ is
called (locally) \textit{exact} at the point $x^*$ (or, to be more precise, at the point $(x^*, \mu_0)$) if there
exists $\lambda \ge 0$ such that $(x^*, \mu_0)$ is a point of local minimum of the problem (\ref{PenalProblem}). The
greatest lower bound of all such $\lambda$ is denoted by $\lambda(x^*)$.
\end{definition}

\begin{definition}
The penalty function $F_{\lambda}$ is said to be (globally) \textit{exact}, if there exists $\lambda \ge 0$ such that
$F_{\lambda}$ attains a global minimum on the set $A \times P$, and if $(x^*, \mu^*) \in A \times P$ is a globally
optimal solution of the problem (\ref{PenalProblem}), then $\mu^* = \mu_0$. The greatest lower bound of all such
$\lambda \ge 0$ is denoted by $\lambda^*(g, \beta)$.
\end{definition}

\begin{remark}
{(i) Note that if $(x^*, \mu^*) \in A \times P$ is a globally optimal solution of the problem (\ref{PenalProblem}) with 
$\mu^* = \mu_0$, then $x^*$ is a globally optimal solution of the problem ($\mathcal{P}$) by virtue of the exact
penalization condition on the function $g(x, \mu)$, and the fact that $F_{\lambda}(x, \mu_0) = g(x, \mu_0)$. Thus, 
the penalty function $F_{\lambda}$ is globally exact iff there exists $\lambda \ge 0$ such that the problem
(\ref{PenalProblem}) is equivalent to the problem ($\mathcal{P}$) in terms of globally optimal solutions.
}

\noindent{(ii) It is easy to verify that if $F_{\lambda}$ is globally exact, then for any 
$\lambda > \lambda^*(g, \beta)$ the function $F_{\lambda}$ attains a global minimum on the set $A \times P$, and if
$(x^*, \mu^*) \in A \times P$ is a globally optimal solution of the problem (\ref{PenalProblem}), then $\mu^* = \mu_0$.
}
\end{remark}

Our aim is to show that the exactness of the penalty function $F_{\lambda}$ is closely related to some properties of the
perturbed optimization problem
$$
  \min_{x} g(x, \mu) \quad \text{subject to} \quad x \in A   \eqno{(\mathcal{P}_{\mu})}.
$$
Denote by $h(\mu) = \inf_{x \in A} g(x, \mu)$ \textit{the optimal value function} of this problem.

Recall that the problem ($\mathcal{P}_{\overline{\mu}}$), $\overline{\mu} \in P$, is said to be $\beta$-\textit{calm}
at a point $x^* \in A$ if there exist $\lambda \ge 0$, $r > 0$, and a neighbourhood $U$ of $x^*$ such that
$$
  g(x, \mu) - g(x^*, \overline{\mu}) \ge - \lambda \beta(d(\mu, \overline{\mu})) \quad 
  \forall x \in U \cap A \quad \forall \mu \in B(\overline{\mu}, r),
$$
where $B(\overline{\mu}, r) = \{ \mu \in P \mid d(\mu, \overline{\mu}) \le r \}$.

\begin{remark}
If $\beta(t) \equiv t$, then the concept of $\beta$-calmness coincides with the well-known concept of
calmness of a perturbed optimization problem \cite{Clarke, ClarkeBook, Burke, Uderzo}.
\end{remark}

The following propositions describes a connection between the exactness of the penalty function $F_{\lambda}$ and
the calmness of the perturbed problem ($\mathcal{P}_{\mu}$) (cf. an analogous result for classical exact penalty
functions in~\cite{Burke}).

\begin{proposition} \label{PrpLocalExNSC}
Let $x^* \in \dom f$ be a point of local minimum of the problem ($\mathcal{P}$). Then the penalty function $F_{\lambda}$
is exact at $x^*$ if and only if the problem ($\mathcal{P}_{\mu_0}$) is $\beta$-calm at $x^*$.
\end{proposition}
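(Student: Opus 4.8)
The plan is to prove both implications directly from the definitions, using the local minimality of $x^*$ in $(\mathcal{P})$ together with the consistency condition $g(x^*, \mu_0) = f(x^*)$. Note first that since $d(\mu_0, \mu_0) = 0$ and $\beta(0) = 0$, we have $F_{\lambda}(x^*, \mu_0) = g(x^*, \mu_0) = f(x^*)$ for every $\lambda \ge 0$, so the question of local exactness at $x^*$ is precisely the question of whether $F_{\lambda}(x, \mu) \ge f(x^*)$ for all $(x, \mu)$ in some neighbourhood of $(x^*, \mu_0)$ with $x \in A$, for some sufficiently large $\lambda$.

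First I would prove the ``only if'' direction. Suppose $F_{\lambda}$ is exact at $x^*$, so there exist $\lambda \ge 0$ and a neighbourhood $W$ of $(x^*, \mu_0)$ in $X \times P$ such that $F_{\lambda}(x, \mu) \ge F_{\lambda}(x^*, \mu_0) = f(x^*)$ for all $(x, \mu) \in W$ with $x \in A$. Shrinking $W$, I may take it of the form $U \times B(\mu_0, r)$ for a neighbourhood $U$ of $x^*$ and some $r > 0$. Unpacking the definition of $F_{\lambda}$ gives $g(x, \mu) + \lambda \beta(d(\mu, \mu_0)) \ge f(x^*)$, i.e. $g(x, \mu) - f(x^*) \ge -\lambda \beta(d(\mu, \mu_0))$ for all $x \in U \cap A$ and $\mu \in B(\mu_0, r)$. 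Since $g(x^*, \mu_0) = f(x^*)$, this is exactly the $\beta$-calmness inequality of $(\mathcal{P}_{\mu_0})$ at $x^*$ with $\overline{\mu} = \mu_0$.

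For the ``if'' direction, assume $(\mathcal{P}_{\mu_0})$ is $\beta$-calm at $x^*$: there are $\lambda_0 \ge 0$, $r_0 > 0$ and a neighbourhood $U_0$ of $x^*$ with $g(x, \mu) - g(x^*, \mu_0) \ge -\lambda_0 \beta(d(\mu, \mu_0))$ for all $x \in U_0 \cap A$, $\mu \in B(\mu_0, r_0)$. Here the extra ingredient I need is that $x^*$ is a local minimizer of $(\mathcal{P})$ itself, not just that it is feasible: this gives a neighbourhood $U_1$ with $f(x) \ge f(x^*)$ for all $x \in U_1 \cap \Omega = U_1 \cap M \cap A$. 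Combining, on $U := U_0 \cap U_1$ I claim $F_{\lambda}(x, \mu) \ge f(x^*)$ for $\lambda = \lambda_0$: indeed $F_{\lambda}(x, \mu) = g(x, \mu) + \lambda \beta(d(\mu, \mu_0)) \ge g(x^*, \mu_0) = f(x^*)$ by the calmness inequality, valid for all $x \in U \cap A$ and $\mu \in B(\mu_0, r_0)$. Hence $(x^*, \mu_0)$ is a local minimum of $(\mathcal{P})$ — wait, of the penalized problem \eqref{PenalProblem} — so $F_{\lambda}$ is exact at $x^*$.

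I expect the proof to be essentially a matching of definitions, so the only real subtlety is bookkeeping: making sure neighbourhoods in $X \times P$ can be taken of product form $U \times B(\overline{\mu}, r)$ (true since $P$ is a metric space and the product topology has such a base), and being careful that in the ``if'' direction the local minimality of $x^*$ in $(\mathcal{P})$ is what lets us conclude — curiously, this hypothesis is not needed in the ``only if'' direction but is needed here only insofar as the definition of local exactness presupposes $x^*$ is a local minimizer of $(\mathcal{P})$; in fact the calmness inequality with $\overline{\mu} = \mu_0$ already delivers $F_\lambda(x,\mu) \ge f(x^*) = F_\lambda(x^*,\mu_0)$ directly, so no appeal to $f(x) \ge f(x^*)$ is actually required. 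The main ``obstacle'', such as it is, is simply resisting the temptation to overcomplicate: both directions are one line once the product-neighbourhood reduction and the identity $F_{\lambda}(x^*, \mu_0) = f(x^*)$ are in place.
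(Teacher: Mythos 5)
Your proof is correct and follows essentially the same route as the paper's: both directions amount to unpacking the definitions and observing that the $\beta$-calmness inequality at $x^*$ with $\overline{\mu}=\mu_0$ is, term for term, the statement that $(x^*,\mu_0)$ is a local minimizer of $F_\lambda$ over $A\times P$, once neighbourhoods are taken in product form and one uses $F_\lambda(x^*,\mu_0)=g(x^*,\mu_0)$. Your closing observation that the local minimality of $x^*$ in $(\mathcal{P})$ is never actually invoked is also consistent with the paper's (equally short) argument.
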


\begin{proof}
The validity of the proposition follows from the fact that the inequality
$$
  g(x, \mu) - g(x^*, \mu_0) \ge - \lambda \beta(d(\mu, \mu_0)) \quad 
  \forall x \in U \cap A \quad \forall \mu \in B(\overline{\mu}, r),
$$
holds true for some $\lambda \ge 0$, $r \ge 0$, and a neighbourhood $U$ of $x^*$ iff for any $x \in U \cap A$ and 
$\mu \in B(\mu_0, r)$ one has
$$
  F_{\lambda}(x, \mu) = g(x, \mu) + \beta(d(\mu, \mu_0)) \ge g(x, \mu_0) = F_{\lambda}(x, \mu_0),
$$
i.e. iff $(x^*, \mu_0)$ is a point of local minimum of $F_{\lambda}$ on the set $A$.  
\end{proof}

Let us turn to the study of global exactness. We need an auxiliary definition. The optimal value function $h$ is called
$\beta$-\textit{calm} from below at a point $\overline{\mu} \in P$, if 
$$
  \liminf_{\mu \to \mu_0} \frac{h(\mu) - h(\overline{\mu})}{\beta(d(\mu, \overline{\mu}))} > - \infty
$$
(cf.~the definition of calmness from below in \cite{Penot}).

\begin{theorem} \label{ThmGlobExNSC}
Let $\beta$ be strictly increasing. For the penalty function $F_{\lambda}$ to be globally exact it is necessary and
sufficient
that the following assumptions hold true:
\begin{enumerate}
\item{there exists a globally optimal solution of the problem ($\mathcal{P}$);}

\item{the optimal value function $h(\mu)$ is $\beta$-calm from below at $\mu_0$;} 

\item{there exists $\lambda_0 \ge 0$ such that $F_{\lambda_0}$ is bounded below on the set $A \times P$.}
\end{enumerate}
\end{theorem}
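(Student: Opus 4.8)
The plan is to prove the two directions of the equivalence, relying throughout on the fact that $F_\lambda$ is globally exact iff there is some $\lambda$ for which the penalized problem \eqref{PenalProblem} has a global solution and every such global solution has $\mu^\ast = \mu_0$. For the \emph{necessity} direction, suppose $F_\lambda$ is globally exact with threshold $\lambda^\ast(g,\beta)$. Fixing any $\lambda > \lambda^\ast(g,\beta)$, Remark~3(ii) guarantees $F_\lambda$ attains a global minimum at some $(x^\ast,\mu_0)$; then $x^\ast \in A$ minimizes $g(\cdot,\mu_0)$ over $A$, and by the exact penalization condition $x^\ast \in \argmin_{x\in\Omega} f$, which immediately gives assumption~1. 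Assumption~3 is essentially immediate too: taking $\lambda_0 = \lambda$, the value $F_\lambda(x^\ast,\mu_0)$ is a global minimum, hence a lower bound. The substantive part of necessity is assumption~2: I would argue that for $\lambda > \lambda^\ast(g,\beta)$ and any $\mu$ near $\mu_0$, every point $(x,\mu)$ satisfies $g(x,\mu) + \lambda\beta(d(\mu,\mu_0)) \ge F_\lambda(x^\ast,\mu_0) = h(\mu_0)$, whence $g(x,\mu) - h(\mu_0) \ge -\lambda\beta(d(\mu,\mu_0))$; taking the infimum over $x \in A$ gives $h(\mu) - h(\mu_0) \ge -\lambda\beta(d(\mu,\mu_0))$, and dividing by $\beta(d(\mu,\mu_0)) > 0$ (using that $\beta$ vanishes only at $0$) yields $\liminf_{\mu\to\mu_0} (h(\mu)-h(\mu_0))/\beta(d(\mu,\mu_0)) \ge -\lambda > -\infty$.

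For the \emph{sufficiency} direction, assume 1--3 hold. By assumption~2 there are $\lambda_1 \ge 0$ and $r > 0$ such that $h(\mu) - h(\mu_0) \ge -\lambda_1 \beta(d(\mu,\mu_0))$ for all $\mu \in B(\mu_0,r)$. Set $\lambda = \max\{\lambda_0, \lambda_1, \text{(a correction term)}\} + 1$, say, where the correction term handles the behaviour of $\mu$ \emph{outside} $B(\mu_0,r)$: on $B(\mu_0,r)$ the calm-from-below estimate gives $F_\lambda(x,\mu) \ge h(\mu) + \lambda\beta(d(\mu,\mu_0)) \ge h(\mu_0)$ for all $x\in A$, while for $\mu \notin B(\mu_0,r)$ we have $\beta(d(\mu,\mu_0)) \ge \beta(r) > 0$ (here strict monotonicity of $\beta$ is used), so $F_\lambda(x,\mu) \ge F_{\lambda_0}(x,\mu) + (\lambda-\lambda_0)\beta(r)$, which, using the lower bound from assumption~3, can be made $\ge h(\mu_0)$ by choosing $\lambda$ large enough. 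Thus $F_\lambda$ is bounded below on $A\times P$ by $h(\mu_0)$, and this bound is attained: by assumption~1 a global solution $x^\ast$ of $(\mathcal{P})$ exists, and $F_\lambda(x^\ast,\mu_0) = g(x^\ast,\mu_0) = f(x^\ast) = h(\mu_0)$ by the consistency and exact penalization conditions. Finally, if $(x^\ast,\mu^\ast)$ is any global minimizer, then $F_\lambda(x^\ast,\mu^\ast) = h(\mu_0)$; if $\mu^\ast \ne \mu_0$ then $\beta(d(\mu^\ast,\mu_0)) > 0$, and since $g(x^\ast,\mu^\ast) \ge h(\mu^\ast) \ge h(\mu_0) - \lambda_1\beta(d(\mu^\ast,\mu_0))$ we would get a strict inequality $F_\lambda(x^\ast,\mu^\ast) > h(\mu_0)$ once $\lambda$ exceeds $\lambda_1$ — a contradiction — so $\mu^\ast = \mu_0$.

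The main obstacle is the uniform lower bound on $A \times P$ in the sufficiency argument: the calmness hypothesis is purely \emph{local} around $\mu_0$, so one cannot conclude boundedness below from it alone, which is precisely why assumption~3 is included, and the delicate point is to splice the local estimate on $B(\mu_0,r)$ together with the global bound on the complement while choosing a single $\lambda$ that works for both — this is where strict monotonicity of $\beta$ (giving $\beta(d(\mu,\mu_0)) \ge \beta(r) > 0$ off the ball) is essential. A secondary technical care point is the handling of the $\liminf$ when $d(\mu,\mu_0)$ can be zero for $\mu \ne \mu_0$ (if $d$ is a pseudometric or $P$ is not Hausdorff); I would either note that $\beta(d(\mu,\mu_0)) = 0$ forces $g(x,\mu) \ge h(\mu_0)$ directly by the same inequality, or simply restrict attention to $\mu$ with $d(\mu,\mu_0) > 0$ as in the definition of $\beta$-calmness from below.
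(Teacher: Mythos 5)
Your proposal is correct and follows essentially the same route as the paper's proof: necessity by extracting a global minimizer $(x^*,\mu_0)$, identifying $F_\lambda(x^*,\mu_0)=h(\mu_0)$ and taking the infimum over $x\in A$ in the global-minimum inequality; sufficiency by splicing the local calmness estimate on $B(\mu_0,r)$ with the bound from assumption~3 on the complement (where $\beta(d(\mu,\mu_0))\ge\beta(r)>0$), then forcing $\mu^*=\mu_0$ by strictness once $\lambda$ exceeds the threshold. The one cosmetic point is that your final uniqueness step invokes the calmness inequality at $\mu^*$, which is only available for $\mu^*\in B(\mu_0,r)$; for $\mu^*$ outside the ball the strict inequality must instead come from your off-ball estimate with $\lambda$ strictly above the computed threshold, which is exactly how the paper handles all $\mu\ne\mu_0$ uniformly.
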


\begin{proof}
Necessity. Since the penalty function $F_{\lambda}$ is globally exact, then it attains a global minimum on the set $A
\times P$ for some $\lambda \ge 0$. Therefore, in particular, $F_{\lambda}$ is bounded below on $A \times P$.

Fix a sufficiently large $\lambda \ge 0$, and a global minimizer $(x^*, \mu^*)$ of $F_{\lambda}$ on the set 
$A \times P$. Due to the exactness of $F_{\lambda}$ one has $\mu^* = \mu_0$. Therefore, as it was mentioned above, $x^*$
is a globally optimal solution of the problem ($\mathcal{P}$). Thus, the first assumption is valid as well.

Observe that
$$
  g(x^*, \mu_0) \ge h(\mu_0) := \inf_{x \in A} g(x, \mu_0) \ge \inf_{(x, \mu) \in A \times P} F_{\lambda}(x, \mu) = 
  g(x^*, \mu_0).
$$
Hence $h(\mu_0) = g(x^*, \mu_0)$. The exactness of $F_{\lambda}$ implies that for all $x \in A$ and $\mu \in P$ the
one has
$$
  g(x, \mu) + \lambda \beta(d(\mu, \mu_0)) = F_{\lambda}(x, \mu) \ge F_{\lambda}(x^*, \mu_0) = g(x^*, \mu_0) = h(\mu_0)
$$
or, equivalently,
$$
  g(x, \mu) - h(\mu_0) \ge - \lambda \beta(d(\mu, \mu_0)) \quad \forall x \in A \quad \forall \mu \in P.
$$
Since the inequality above holds true for all $x \in A$, one obtains that
$$
  h(\mu) - h(\mu_0) \ge - \lambda \beta(d(\mu, \mu_0)) \quad \forall \mu \in P,
$$
which implies the $\beta$-calmness from below of $h$ at $\mu_0$.

Sufficiency. Let $x^*$ be a globally optimal solution of the problem ($\mathcal{P}$). Then taking into account the exact
penalization condition on the function $g(x, \mu)$ one gets that $h(\mu_0) = g(x^*, \mu_0)$, i.e. $x^*$ is a point of
global minimum of the function $x \to g(x, \mu_0)$.

From the fact that the optimal value function $h$ is $\beta$-calm from below at $\mu_0$ it follows that there exist
$\lambda_1 \ge 0$ and $\delta > 0$ such that
$$
  h(\mu) - h(\mu_0) \ge - \lambda_1 \beta(d(\mu, \mu_0)) \quad \forall \mu \in B(\mu_0, \delta).
$$
Hence for any $x \in A$ one has
$$
  g(x, \mu) - g(x^*, \mu_0) \ge h(\mu) - h(\mu_0) \ge - \lambda_1 \beta(d(\mu, \mu_0)) \quad 
  \forall \mu \in B(\mu_0, \delta)
$$
or, equivalently, for all $(x, \mu) \in A \times B(\mu_0, \delta)$ one has
$$
  F_{\lambda_1}(x, \mu) = g(x, \mu) + \lambda_1 \beta(d(\mu, \mu_0)) \ge g(x^*, \mu_0) = F_{\lambda_1}(x^*, \mu_0).
$$
On the other hand, if $x \in A$ and $\mu \notin B(\mu_0, \delta)$, then for any $\lambda \ge \lambda_2$, where
$$
  \lambda_2 = \lambda_0 + \frac{g(x^*, \mu_0) - c}{\beta(\delta)}, \quad
  c = \inf_{(x, \mu) \in A \times P} F_{\lambda_0}(x, \mu) > -\infty,
$$
one has
\begin{multline*}
  F_{\lambda}(x, \mu) = F_{\lambda_0}(x, \mu) + (\lambda - \lambda_0) \beta(d(\mu, \mu_0)) \ge \\
  \ge c + (\lambda - \lambda_0) \beta(\delta) \ge g(x^*, \mu_0) = F_{\lambda}(x^*, \mu_0).
\end{multline*}
Thus, for any $\lambda \ge \overline{\lambda} := \max\{ \lambda_1, \lambda_2 \}$ one has
$$
  F_{\lambda}(x, \mu) \ge F_{\lambda}(x^*, \mu_0) \quad \forall (x, \mu) \in A \times P
$$
or, in other words, the penalty function $F_{\lambda}$ attains a global minimum on $A \times P$ at the point 
$(x^*, \mu_0)$. Let $(\overline{x}, \overline{\mu}) \in A \times P$ be a different global minimizer of 
$F_{\lambda}$ on $A \times P$. Let us show that $\overline{\mu} = \mu_0$, provided 
$\lambda > \overline{\lambda}$, then one concludes that the penalty function $F_{\lambda}$ is globally exact.

Indeed, for any $\lambda > \overline{\lambda}$, $x \in A$ and $\mu \ne \mu_0$ one has
$$
  F_{\lambda}(x^*, \mu_0) = F_{\overline{\lambda}}(x^*, \mu_0) \le F_{\overline{\lambda}}(x, \mu) < F_{\lambda}(x, \mu),
$$
since $\beta(d(\mu, \mu_0)) > 0$. Hence $\overline{\mu} = \mu_0$ by virtue of the fact that 
$(\overline{x}, \overline{\mu})$ is a global minimizer of $F_{\lambda}$ on $A \times P$.  
\end{proof}

Let us also point out a connection between the calmness of the optimal value function $h$ and the calmness of the
perturbed problem ($\mathcal{P}_{\mu_0}$) at globally optimal solutions of the problem ($\mathcal{P}$) in the case when
the set $A$ is compact. 

\begin{theorem} \label{ThmCalmness}
Let the set $A$ be compact, and the function $g(x, \mu)$ be lower semicontinuous (l.s.c.) on $A \times B(\mu_0, r)$ for
some $r > 0$. Then for the optimal value function $h$ to be $\beta$-calm from below at $\mu_0$ it is necessary and
sufficient that the problem ($\mathcal{P}_{\mu_0}$) is $\beta$-calm at every globally optimal solution of the problem
($\mathcal{P}$).
\end{theorem}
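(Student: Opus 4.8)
The plan is to unwind the definitions in both directions, with the compactness of $A$ and the lower semicontinuity of $g$ entering only in the sufficiency part, through a covering argument. First I record what is available. By the consistency and exact penalization conditions, the set $\Omega^*$ of globally optimal solutions of $(\mathcal{P})$ coincides with $\argmin_{x \in A} g(\cdot, \mu_0)$, and $h(\mu_0) = \inf_{x \in \Omega} f(x) = g(x^*, \mu_0)$ for every $x^* \in \Omega^*$. Moreover, since $x \mapsto g(x, \mu_0)$ is l.s.c. on the compact set $A$ (being a restriction of $g$), this minimum is attained, so $\Omega^* \ne \emptyset$ and the assertion about $(\mathcal{P}_{\mu_0})$ is non-vacuous.

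\emph{Necessity} uses neither compactness nor lower semicontinuity. Suppose $h$ is $\beta$-calm from below at $\mu_0$, and pick $\lambda \ge 0$ and $\delta > 0$ with $h(\mu) - h(\mu_0) \ge - \lambda \beta(d(\mu, \mu_0))$ for all $\mu \in B(\mu_0, \delta)$. Then for every $x^* \in \Omega^*$, every $x \in A$ and every $\mu \in B(\mu_0, \delta)$ one has $g(x, \mu) - g(x^*, \mu_0) \ge h(\mu) - h(\mu_0) \ge - \lambda \beta(d(\mu, \mu_0))$, since $g(x, \mu) \ge h(\mu)$ and $g(x^*, \mu_0) = h(\mu_0)$; this is precisely $\beta$-calmness of $(\mathcal{P}_{\mu_0})$ at $x^*$ (taking $U = X$ and $r = \delta$).

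\emph{Sufficiency} is the heart of the matter. Assume $(\mathcal{P}_{\mu_0})$ is $\beta$-calm at every $x^* \in \Omega^*$; the goal is to produce $\lambda' \ge 0$ and $\delta > 0$ with $h(\mu) - h(\mu_0) \ge - \lambda' \beta(d(\mu, \mu_0))$ on $B(\mu_0, \delta)$. I will attach to every point $x \in A$ an open neighbourhood $U_x$, a radius $r_x > 0$ and a constant $\lambda_x \ge 0$ such that $g(x', \mu) - h(\mu_0) \ge - \lambda_x \beta(d(\mu, \mu_0))$ for all $x' \in U_x \cap A$ and $\mu \in B(\mu_0, r_x)$, and then pass to a finite subcover. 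If $g(x, \mu_0) = h(\mu_0)$, then $x \in \Omega^*$ and the assumed $\beta$-calmness at $x$ supplies $U_x, r_x, \lambda_x$ at once (recall $g(x, \mu_0) = h(\mu_0)$). If $g(x, \mu_0) > h(\mu_0)$, then by lower semicontinuity of $g$ at $(x, \mu_0)$ there are $U_x$ and $r_x \in (0, r)$ with $g(x', \mu) > h(\mu_0)$ on $(U_x \cap A) \times B(\mu_0, r_x)$, and since $\beta \ge 0$ the required inequality holds with $\lambda_x = 0$; the remaining case $g(x, \mu_0) < h(\mu_0)$ is impossible because $h(\mu_0)$ is the infimum of $g(\cdot, \mu_0)$ over $A$. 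Now $\{ U_x : x \in A \}$ is an open cover of the compact set $A$; choosing a finite subcover $U_{x_1}, \dots, U_{x_k}$ and setting $\lambda' = \max_{1 \le j \le k} \lambda_{x_j}$ and $\delta = \min_{1 \le j \le k} r_{x_j}$, one obtains $g(x, \mu) - h(\mu_0) \ge - \lambda' \beta(d(\mu, \mu_0))$ for all $x \in A$ and $\mu \in B(\mu_0, \delta)$ (using $\beta \ge 0$ once more to enlarge each $\lambda_{x_j}$ to $\lambda'$). Passing to the infimum over $x \in A$ gives $h(\mu) - h(\mu_0) \ge - \lambda' \beta(d(\mu, \mu_0))$ on $B(\mu_0, \delta)$, that is, $h$ is $\beta$-calm from below at $\mu_0$.

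The one genuinely delicate step is the sufficiency direction: $\beta$-calmness at the optimal solutions controls $g$ only near $\Omega^*$, so one must still prevent $g$ from dipping far below $h(\mu_0)$ on the rest of $A$ as $\mu \to \mu_0$. This is exactly where both hypotheses are used — lower semicontinuity keeps $g$ strictly above $h(\mu_0)$ on a full neighbourhood (in $A \times P$) of $(A \setminus \Omega^*) \times \{ \mu_0 \}$, and compactness of $A$ upgrades the resulting pointwise local bounds to a single bound, uniform in $x$, with a common radius in $\mu$. (Without compactness the calmness from below of $h$ could fail even though $(\mathcal{P}_{\mu_0})$ stays $\beta$-calm at every globally optimal solution of $(\mathcal{P})$.)
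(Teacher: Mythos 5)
Your proof is correct and follows essentially the same route as the paper's: necessity by directly unwinding the definitions using $g(x^*,\mu_0)=h(\mu_0)$, and sufficiency by distinguishing points of $A$ where the assumed $\beta$-calmness applies from points where lower semicontinuity keeps $g$ strictly above $h(\mu_0)$ on a neighbourhood, with compactness supplying a uniform $\lambda$ and radius. The only difference is organizational: you extract one finite subcover of all of $A$, whereas the paper first covers the compact set $A^*$ of minimizers and then separately covers the compact remainder $K = A \setminus U$.
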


\begin{proof}
Necessity. Fix a globally optimal solution $x^*$ of the problem ($\mathcal{P}$). From the definition of $\beta$-calmness
it follows that there exist $\lambda \ge 0$ and $r > 0$ such that
$$
  h(\mu) - h(\mu_0) \ge - \lambda \beta(d(\mu, \mu_0)) \quad \forall \mu \in B(\mu_0, r).
$$
From the fact that $x^*$ is a globally optimal solution of the problem ($\mathcal{P}$) it follows that 
$h(\mu_0) = g(x^*, \mu_0)$ due to the exact penalization condition on $g(x, \mu)$. Therefore
$$
  g(x, \mu) - g(x^*, \mu_0) \ge h(\mu) - h(\mu_0) \ge - \lambda \beta(d(\mu, \mu_0)) \quad 
  \forall (x, \mu) \in A \times B(\mu_0, r).
$$
Thus, the problem ($\mathcal{P}_{\mu_0}$) is $\beta$-calm at $x^*$.

Sufficiency. Taking into account the facts that $A$ is compact, and $g(x, \mu)$ is l.s.c. one gets that the function
$g(\cdot, \mu_0)$ attains a global minimum on the set $A$, and the set $A^*$ of all points of global minimum of
$g(\cdot, \mu_0)$ on $A$ is compact. Furthermore, from the exact penalization condition it follows that $A^*$ is also
the set of all globally optimal solutions of the problem ($\mathcal{P}$). Hence the problem ($\mathcal{P}_{\mu_0}$) is
$\beta$-calm at every $x^* \in A^*$. Therefore for any $x^* \in A^*$ there exist $\lambda(x^*) \ge 0$, $r(x^*) \ge 0$
and a neighbourhood $U(x^*)$ of $x^*$ such that
$$
  g(x, \mu) - g(x^*, \mu_0) \ge - \lambda(x^*) \beta(d(\mu, \mu_0)) \quad 
  \forall (x, \mu) \in U(x^*) \times B(\mu_0, r(x^*)).
$$
Applying the compactness of $A^*$ one obtains that there exist $x_1^*, x_2^*, \ldots, x_n^* \in A^*$ such that
$A^* \subset \bigcup_{k = 1}^n U(x^*_k)$. Denote
$$
  U = \bigcup_{k = 1}^n U(x_k^*), \quad \overline{\lambda} = \max_{k \in 1:n} \lambda(x_k^*), \quad
  \overline{r} = \min_{k \in 1:n} r(x_k^*).
$$
Then for any $x \in U$ one has
\begin{equation} \label{CalmNearOptSol}
  g(x, \mu) - h(\mu_0) \ge - \overline{\lambda} \beta(d(\mu, \mu_0)) \quad 
  \forall \mu \in B(\mu_0, \overline{r}),
\end{equation}
due to the fact that $g(x^*, \mu_0) = h(\mu_0) = \inf_{x \in A} g(x, \mu_0)$ for any $x^* \in A^*$.

Set $K = A \setminus U$. Since $A^* \subset U$, for any $x \in K$ one has $g(x, \mu_0) > h(\mu_0)$ (recall that
$A^*$ is the set of all global minimizers of $g(\cdot, \mu_0)$ on $A$). Consequently, applying the lower semicontinuity
of the function $g(x, \mu)$ one gets that for any $x \in K$ there exist $\delta(x) > 0$ and a neighbourhood $V(x)$ of
$x$ such that $g(y, \mu) > h(\mu_0)$ for all $(y, \mu) \in V(x) \times B(\mu_0, \delta(x))$. Observe that from the facts
that $U$ is open and $A$ is compact it follows that $K = A \setminus U$ is also compact. Applying this fact and the
inequality above it is easy to verify that there exists $\delta > 0$ such that $g(x, \mu) > h(\mu_0)$ for all 
$(x, \mu) \in K \times B(\mu_0, \delta)$. Therefore taking into account (\ref{CalmNearOptSol}) one gets that
$$
  g(x, \mu) - h(\mu_0) \ge - \overline{\lambda} \beta(d(\mu, \mu_0)) \quad 
  \forall (x, \mu) \in A \times B(\mu_0, \min\{ \delta, \overline{r} \}),
$$
which yields
$$
  h(\mu) - h(\mu_0) \ge - \overline{\lambda} \beta(d(\mu, \mu_0)) \quad 
  \forall \mu \in B(\mu_0, \min\{ \delta, \overline{r} \}).
$$
Thus, the optimal value function $h$ is $\beta$-calm from below at $\mu_0$.  
\end{proof}

Combining Theorems~\ref{ThmGlobExNSC} and \ref{ThmCalmness}, and Proposition~\ref{PrpLocalExNSC} one obtains that the
following result holds true.

\begin{corollary} \label{Crl_GlobExactCond}
Let $A$ be compact, $g(x, \mu)$ be l.s.c. on $A \times B(\mu_0, r)$ for some $r > 0$, and the function $\beta$ be
strictly increasing. Then the penalty function $F_{\lambda}$ is globally exact if and only if it is exact at every
globally optimal solution of the problem ($\mathcal{P}$), and there exists $\lambda_0 \ge 0$ such that
$F_{\lambda_0}(x, \mu)$ is bounded below on $A \times P$.
\end{corollary}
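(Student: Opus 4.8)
The plan is to obtain the corollary by stitching together Theorem~\ref{ThmGlobExNSC}, Theorem~\ref{ThmCalmness}, and Proposition~\ref{PrpLocalExNSC}, after first disposing of an existence issue.

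First I would note that, under the standing hypotheses, the function $x \mapsto g(x, \mu_0)$ is l.s.c.\ on the compact set $A$ (being the restriction to the slice $A \times \{ \mu_0 \}$ of a function l.s.c.\ on $A \times B(\mu_0, r)$), hence it attains its infimum on $A$. By the exact penalization condition, the set $A^* := \argmin_{x \in A} g(x, \mu_0) = \argmin_{x \in \Omega} f(x)$ is therefore a nonempty set that coincides with the set of all globally optimal solutions of the problem~($\mathcal{P}$). Consequently assumption~1 of Theorem~\ref{ThmGlobExNSC} holds automatically in the present setting, and the quantifier ``at every globally optimal solution of~($\mathcal{P}$)'' appearing in the corollary ranges over the nonempty set $A^*$; moreover every point of $A^*$ is a (global, hence local) minimizer of~($\mathcal{P}$), so Proposition~\ref{PrpLocalExNSC} applies at each such point.

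Next I would translate the two remaining assumptions of Theorem~\ref{ThmGlobExNSC} into the language of the corollary. By Proposition~\ref{PrpLocalExNSC}, for each $x^* \in A^*$ the statement ``$F_{\lambda}$ is exact at $x^*$'' is equivalent to ``the problem~($\mathcal{P}_{\mu_0}$) is $\beta$-calm at $x^*$''; quantifying over $A^*$ gives that ``$F_{\lambda}$ is exact at every globally optimal solution of~($\mathcal{P}$)'' is equivalent to ``($\mathcal{P}_{\mu_0}$) is $\beta$-calm at every globally optimal solution of~($\mathcal{P}$)''. Theorem~\ref{ThmCalmness}, whose hypotheses are precisely the standing ones, shows that the latter is in turn equivalent to ``the optimal value function $h$ is $\beta$-calm from below at $\mu_0$'', i.e.\ to assumption~2 of Theorem~\ref{ThmGlobExNSC}; and assumption~3 of that theorem is verbatim the condition that there exist $\lambda_0 \ge 0$ with $F_{\lambda_0}$ bounded below on $A \times P$. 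Since assumption~1 is automatic, Theorem~\ref{ThmGlobExNSC} yields that $F_{\lambda}$ is globally exact if and only if assumptions~2 and~3 hold, which by the equivalences just recorded is exactly the conjunction claimed in the corollary.

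I expect no genuine obstacle here: the argument is a straightforward chaining of the three cited results. The only point that needs a little care is the existence of a globally optimal solution of~($\mathcal{P}$) --- needed both to make assumption~1 of Theorem~\ref{ThmGlobExNSC} free and to ensure that the family of points over which we quantify in the corollary is nonempty (so that ``exact at every globally optimal solution'' is not vacuously true) --- and this is supplied by the Weierstrass-type argument of the first paragraph, using the compactness of $A$ and the lower semicontinuity of $g(\cdot, \mu_0)$.
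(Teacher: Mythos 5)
Your proof is correct and follows essentially the same route as the paper, which simply states that the corollary is obtained by combining Theorems~\ref{ThmGlobExNSC} and \ref{ThmCalmness} with Proposition~\ref{PrpLocalExNSC}. You in fact supply more detail than the paper does, in particular the Weierstrass-type argument (compactness of $A$ plus lower semicontinuity of $g(\cdot, \mu_0)$, combined with the exact penalization condition) showing that assumption~1 of Theorem~\ref{ThmGlobExNSC} is automatic --- the same observation the paper makes implicitly inside the sufficiency part of Theorem~\ref{ThmCalmness}.
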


\section{Smooth Exact Penalty Functions}
\label{Sctn_ProofOfExactness}

Let us apply the theory developed in the previous section to the study of Huyer and Neumaier's exact penalty functions.
Let $X$ and $Y$ be metric spaces, $A \subset X$ be a nonempty set, and $\Phi \colon X \rightrightarrows Y$ be a given
set-valued mapping with closed images. For any subset $C \subset X$ and $x_0 \in X$ denote by 
$d(x_0, C) = \inf_{x \in C} d(x_0, x)$ the distance between $C$ and $x_0$. For any $y \in Y$ denote, as usual, 
$\Phi^{-1}(y) = \{ x \in X \mid y \in \Phi(x) \}$.

Fix an element $y_0 \in Y$, and consider the following optimization problem:
\begin{equation} \label{MathProg}
  \min f(x) \quad \text{subject to} \quad y_0 \in \Phi(x), \quad x \in A.
\end{equation}
Note that the set of feasible points of this problem has the form $\Omega = \Phi^{-1}(y_0) \cap A$.

Following the general technique proposed above and the method of smooth exact penalty functions \cite{WangMaZhou},
define $P = \mathbb{R}_+$, fix a non-decreasing function 
$\phi \colon \mathbb{R}_+ \cup \{ + \infty \} \to \mathbb{R}_+ \cup \{ +\infty \}$
such that $\phi(t) = 0$ iff $t = 0$, and introduce the perturbed objective function
$$
  g(x, \mu) = \begin{cases}
    f(x), & \text{ if } x \in \Omega, \mu = 0, \\
    + \infty, & \text{ if } x \notin \Omega, \mu = 0, \\
    f(x) + \frac{1}{\mu} \phi( d(y_0, \Phi(x))^2 ), & \text{ if } \mu > 0.
  \end{cases}
$$
Clearly, the function $g(x, \mu)$ satisfies the consistency condition and the exact penalization condition with 
$\mu_0 = 0$.

Introduce the penalty function
$$
  F_{\lambda}(x, \mu) = g(x, \mu) + \lambda \beta(\mu),
$$
where $\beta \colon \mathbb{R}_+ \to \mathbb{R}_+ \cup \{ + \infty \}$ is a non-decreasing function such that 
$\beta(\mu) = 0$ iff $\mu = 0$. Let us obtain sufficient conditions for $F_{\lambda}(x, \mu)$ to be exact. In order to
formulate these conditions, recall that a set valued mapping $\Phi$ is said to be \textit{metrically subregular} with
respect to the set $A$ with constant $a > 0$ at a point $(\overline{x}, \overline{y}) \in X \times Y$ with
$\overline{y} \in \Phi(\overline{x})$ and $\overline{x} \in A$, if there exists a neighbourhood $U$ of $\overline{x}$
such that
$$
  d(\Phi(x), \overline{y}) \ge a d(x, \Phi^{-1}(\overline{y}) \cap A) \quad \forall x \in U \cap A.
$$
Thus, $\Phi$ is metrically subregular with respect to the set $A$ iff the restriction of $\Phi$ to $A$ is metrically
subregular in the usual sense. See \cite{Aze, Kruger} and the references therein for the extensive study of metric
subregularity.

\begin{theorem} \label{Thm_LocExSmoothPenFunc}
Let $x^* \in \dom f$ be a point of local minimum of the problem (\ref{MathProg}), the function $f$ be Lipschitz
continuous near $x^*$, and the set-valued mapping $\Phi$ be metrically subregular with respect to the set $A$ with
constant $a > 0$ at $(x^*, y_0)$. Suppose also that the following assumptions are satisfied:
\begin{enumerate}
\item{there exist $\phi_0 > 0$ and $t_0 > 0$ such that $\phi(t) \ge \phi_0 t$ for any 
$t \in [0, t_0]$;
}

\item{there exist $\beta_0 > 0$ and $\overline{\mu} > 0$ such that $\beta(\mu) \ge \beta_0 \mu$ for any
$\mu \in [0, \overline{\mu}]$.
}
\end{enumerate}
Then the penalty function $F_{\lambda}(x, \mu)$ is exact at $x^*$. Moreover, one has
\begin{equation} \label{UpperEstimExPenParam}
  \lambda(x^*) \le \frac{L^2}{4 \phi_0 \beta_0 a^2},
\end{equation}
where $L \ge 0$ is a Lipschitz constant of $f$ near $x^*$.
\end{theorem}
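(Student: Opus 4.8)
The plan is to deduce the theorem from Proposition~\ref{PrpLocalExNSC}. Here $\mu_0 = 0$, and $g(\cdot,0)$ equals $f$ on $\Omega$ and $+\infty$ off $\Omega$, so the problem $(\mathcal{P}_{\mu_0})$ amounts to problem~(\ref{MathProg}). By Proposition~\ref{PrpLocalExNSC} it therefore suffices to show that $(\mathcal{P}_{\mu_0})$ is $\beta$-calm at $x^*$, and keeping track of the calmness constant will yield~(\ref{UpperEstimExPenParam}). Concretely, I must exhibit $\lambda \ge 0$, $r > 0$, and a neighbourhood $U$ of $x^*$ with
\[
  g(x,\mu) - f(x^*) \ge - \lambda\beta(\mu) \qquad \forall x \in U \cap A, \ \forall \mu \in [0,r],
\]
and verify that $\lambda = L^2/(4\phi_0\beta_0 a^2)$ is admissible.

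First I would fix the ingredients: $\rho_0 > 0$ with $f(x) \ge f(x^*)$ for all $x \in \Omega \cap B(x^*,\rho_0)$; $\rho_1 > 0$ on whose ball $f$ is $L$-Lipschitz; and $\rho_2 > 0$ from the definition of metric subregularity, so that $d(y_0,\Phi(x)) \ge a\, d(x, \Phi^{-1}(y_0)\cap A) = a\, d(x,\Omega)$ for $x \in B(x^*,\rho_2)\cap A$. Put $U = B(x^*,\rho)$ with $\rho = \tfrac{1}{2}\min\{\rho_0,\rho_1,\rho_2\}$. The central estimate is that for every $x \in U \cap A$,
\[
  f(x) - f(x^*) \ge - L\, d(x,\Omega) \ge - \frac{L}{a}\, d(y_0,\Phi(x)).
\]
For the first inequality one picks $x' \in \Omega$ with $d(x,x')$ as close as desired to $d(x,\Omega)$ (no closedness of $\Omega$ is needed, one passes to the limit); since $d(x,\Omega) \le d(x,x^*) < \rho$, such an $x'$ satisfies $d(x',x^*) < 2\rho \le \min\{\rho_0,\rho_1\}$, whence $f(x') \ge f(x^*)$ and $f(x) \ge f(x') - L\, d(x,x')$, and letting $d(x,x') \to d(x,\Omega)$ gives the claim; the second inequality is metric subregularity.

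Next I would split according to the size of $t := d(y_0,\Phi(x))^2$. For $\mu = 0$ the required inequality is immediate because $\rho \le \rho_0$. For $0 < \mu \le \overline{\mu}$ with $t \le t_0$, assumption~1 gives $\phi(t) \ge \phi_0 t$, so writing $s := \sqrt{t}$ and using the central estimate,
\[
  g(x,\mu) - f(x^*) = \big(f(x) - f(x^*)\big) + \frac{1}{\mu}\phi(t) \ge - \frac{L}{a}s + \frac{\phi_0}{\mu}s^2 \ge \min_{\sigma \ge 0}\Big( \frac{\phi_0}{\mu}\sigma^2 - \frac{L}{a}\sigma \Big) = - \frac{L^2\mu}{4\phi_0 a^2}.
\]
Since $\beta(\mu) \ge \beta_0\mu$ on $[0,\overline{\mu}]$ by assumption~2, the right-hand side is at least $-\lambda\beta(\mu)$ as soon as $\lambda \ge L^2/(4\phi_0\beta_0 a^2)$. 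For $0 < \mu \le \overline{\mu}$ with $t > t_0$, monotonicity of $\phi$ gives $\tfrac{1}{\mu}\phi(t) \ge \phi_0 t_0/\mu$, while $f(x) - f(x^*) \ge - L\rho$ on $U$, so $g(x,\mu) - f(x^*) \ge \phi_0 t_0/\mu - L\rho \ge 0 \ge -\lambda\beta(\mu)$ whenever $\mu \le \phi_0 t_0/(L\rho)$. Taking $r = \min\{\overline{\mu},\, \phi_0 t_0/(L\rho)\}$ (and any $r > 0$ if $L\rho = 0$, in which case $f$ is locally constant and the statement is trivial) establishes $\beta$-calmness of $(\mathcal{P}_{\mu_0})$ at $x^*$ with the stated $\lambda$, and Proposition~\ref{PrpLocalExNSC} then gives both the exactness and the estimate~(\ref{UpperEstimExPenParam}).

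The bulk of the work is the routine bookkeeping around $d(x,\Omega)$ and the intersection of the various neighbourhoods. The one genuine subtlety is that $\phi(t) \ge \phi_0 t$ is assumed only on $[0,t_0]$ whereas nothing a priori bounds $d(y_0,\Phi(x))$ near $x^*$, which forces the separate treatment of $t > t_0$; there, however, the factor $1/\mu$ makes the penalty term dominate for small $\mu$, so the case is harmless and merely shrinks $r$. Pinning down the sharp constant $L^2/(4\phi_0\beta_0 a^2)$ is then the elementary minimization of the quadratic $\sigma \mapsto (\phi_0/\mu)\sigma^2 - (L/a)\sigma$.
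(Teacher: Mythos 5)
Your proof is correct and follows essentially the same route as the paper's: the key estimate $f(x)-f(x^*)\ge -L\,d(x,\Omega)$ obtained from a minimizing sequence in $\Omega$, combined with metric subregularity and the linear lower bounds on $\phi$ and $\beta$, followed by minimization of the resulting quadratic to get $g(x,\mu)-g(x^*,0)\ge -L^2\mu/(4\phi_0 a^2)$. The only differences are minor: you route the conclusion explicitly through Proposition~\ref{PrpLocalExNSC} rather than verifying $F_{\lambda}(x,\mu)\ge F_{\lambda}(x^*,0)$ directly, and you handle the regime $d(y_0,\Phi(x))^2>t_0$ by shrinking the range of $\mu$, whereas the paper shrinks the ball radius so that the argument of $\phi$ stays in $[0,t_0]$ --- your treatment of that point is in fact the more careful of the two.
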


\begin{proof}
Since $x^*$ is a point of local minimum of the problem (\ref{MathProg}), there exists $\rho > 0$ such that 
$f(x) \ge f(x^*)$ for any $x \in B(x^*, \rho) \cap \Omega$. Suppose, for a moment, that there exists 
$\delta \in (0, \rho)$ such that
\begin{equation} \label{LowerEstimLipFunc}
  f(x) - f(x^*) \ge - L d(x, \Omega) \quad \forall x \in B(x^*, \delta) \setminus \Omega,
\end{equation}
where $L \ge 0$ is a Lipschitz constant of $f$ near $x^*$. Then applying the metric subregularity of $\Phi$ with
respect to $A$, and the fact that the function $\phi$ is non-decreasing, one gets that for any $\mu > 0$ and 
$x \in B(x^*, r) \cap A$, where $r = \min\{ \delta, t_0 \}$, the following inequalities hold true:
\begin{multline*}
  g(x, \mu) - g(x^*, 0) = f(x) - f(x^*) + \frac{1}{\mu} \phi( d(y_0, \Phi(x))^2 ) \ge \\
  \ge - L d(x, \Omega) + \frac{\phi_0 a^2}{\mu} d(x, \Phi^{-1}(y_0) \cap A)^2 =
  - L d(x, \Omega) + \frac{\phi_0 a^2}{\mu} d(x, \Omega)^2.
\end{multline*}
Note that the function $h(t) = - L t + \phi_0 a^2 t^2 / \mu$ attains a global minimum at the point 
$\mu L / 2 \phi_0 a^2$, and 
$$
  h\left( \frac{\mu L}{2 \phi_0 a^2} \right) = - \frac{L^2}{4 \phi_0 a^2} \mu.
$$
Hence for any $\mu > 0$ and $x \in B(x^*, r) \cap A$ one has
\begin{equation} \label{CalmBelowPOF}
  g(x, \mu) - g(x^*, 0) \ge - \frac{L^2}{4 \phi_0 a^2} \mu.
\end{equation}
On the other hand, if $x \in B(x^*, r) \cap A$ and $\mu = 0$, then either $x \notin \Omega$ and 
$g(x, \mu) = + \infty \ge g(x^*, 0)$ or $x \in \Omega$ and $g(x, \mu) = f(x) \ge f(x^*) = g(x^*, 0)$ (recall that 
$r \le \delta < \rho$). Therefore the inequality (\ref{CalmBelowPOF}) is satisfied for any $x \in B(x^*, r) \cap A$ and 
$\mu \ge 0$, which yields that
$$
  F_{\lambda}(x, \mu) = g(x, \mu) + \lambda \beta(\mu) \ge g(x, \mu) + \lambda \beta_0 \mu \ge
  g(x^*, 0) =  F_{\lambda}(x^*, 0)
$$
for all $(x, \mu) \in B(x^*, r) \times [0, \overline{\mu}]$, and for any $\lambda \ge L^2 / 4 \phi_0 \beta_0 a^2$. Thus,
the penalty function $F_{\lambda}(x, \mu)$ is exact at $x^*$, and (\ref{UpperEstimExPenParam}) holds true.

It remains to show that inequality (\ref{LowerEstimLipFunc}) is valid for some $\delta > 0$. Indeed, 
fix $x \in B(x^*, \rho / 2) \setminus \Omega$. By the definition of the distance between a point and a set
there exists a sequence $\{ x_n \} \subset \Omega$ such that $d(x, x_n) \to d(x, \Omega)$ as $n \to \infty$.
Moreover, without loss of generality one can suppose that $d(x, x_n) \le \rho / 2$ for any $n \in \mathbb{N}$, since
$d(x, x^*) \le \rho / 2$ and $x^* \in \Omega$. Consequently, one has
$$
  d(x_n, x^*) \le d(x_n, x) + d(x, x^*) \le \frac{\rho}{2} + \frac{\rho}{2} = \rho,
$$
which implies that $f(x_n) \ge f(x^*)$. Therefore applying the Lipschitz continuity of $f$ near $x^*$ one obtains that
for any $n \in \mathbb{N}$ the following inequalities holds true
$$
  f(x) - f(x^*) = f(x) - f(x_n) + f(x_n) - f(x^*) \ge f(x) - f(x_n) \ge - L d(x, x_n).
$$
Passing to the limit as $n \to \infty$ one obtains the desired result.  
\end{proof}

Applying Corollary~\ref{Crl_GlobExactCond}, and the theorem above one can easily obtain sufficient conditions for the
penalty function $F_{\lambda}$ to be globally exact.

\begin{theorem}
Let the set $A$ be compact. Suppose that the following assumptions are satisfied:
\begin{enumerate}
\item{$f$ is l.s.c. on $A$, and locally Lipschitz continuous near globally optimal solutions of the problem
($\mathcal{P}$);
}

\item{$\Phi$ is metrically subregular with respect to the set $A$ at $(x^*, y_0)$ for any globally optimal solution
$x^*$ of the problem ($\mathcal{P}$);
}

\item{the mapping $x \to d(y_0, \Phi(x))$ is continuous on $A$;
}

\item{$\phi$ is l.s.c., and there exist $\phi_0 > 0$ and $t_0 > 0$ such that $\phi(t) \ge \phi_0 t$ for
any $t \in [0, t_0]$;
}

\item{$\beta$ is strictly increasing and there exist $\beta_0 > 0$ and $\overline{\mu} > 0$ such that 
$\beta(\mu) \ge \beta_0 \mu$ for any $\mu \in [0, \overline{\mu}]$.
}
\end{enumerate}
Then the penalty function $F_{\lambda}$ is globally exact.
\end{theorem}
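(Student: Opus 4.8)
The plan is to deduce the statement from Corollary~\ref{Crl_GlobExactCond} applied to the penalty function $F_{\lambda}$ of this section. That corollary requires, as standing hypotheses, that $A$ be compact (assumed), that $\beta$ be strictly increasing (part of assumption~5), and that $g$ be lower semicontinuous on $A \times B(\mu_0, r)$ for some $r > 0$; once these are in place it reduces global exactness of $F_{\lambda}$ to two further conditions, namely that $F_{\lambda}$ be exact at every globally optimal solution of~($\mathcal{P}$), and that $F_{\lambda_0}$ be bounded below on $A \times P$ for some $\lambda_0 \ge 0$ (the existence of a globally optimal solution being automatic in this setting). Accordingly, I would verify, in turn: (i)~the lower semicontinuity of $g$ near $\mu_0 = 0$; (ii)~the existence of global minimizers of~($\mathcal{P}$); (iii)~the exactness of $F_{\lambda}$ at each of them, by a direct appeal to Theorem~\ref{Thm_LocExSmoothPenFunc}; and (iv)~the boundedness below of $F_0 = g$.

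For step~(i), on the region $\mu > 0$ I would write $g(x, \mu) = f(x) + \tfrac{1}{\mu}\phi\big(d(y_0, \Phi(x))^2\big)$ and note that the second term is jointly lower semicontinuous in $(x, \mu)$, being the product of the positive continuous factor $1/\mu$ and the nonnegative factor $\phi\big(d(y_0, \Phi(x))^2\big)$, which is lower semicontinuous because $\phi$ is lower semicontinuous and non-decreasing (assumption~4) and $x \mapsto d(y_0, \Phi(x))$ is continuous (assumption~3); since $f$ is lower semicontinuous (assumption~1), so is $g$ on $A \times (0, +\infty)$. The substantive point is lower semicontinuity at a point $(x, 0)$. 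Taking $(x_n, \mu_n) \to (x, 0)$ in $A \times \mathbb{R}_+$: if $x \in \Omega$, then $g(x_n, \mu_n) \ge f(x_n)$ for every $n$ (using $\phi \ge 0$ when $\mu_n > 0$, and $g(x_n, 0) \in \{ f(x_n), +\infty \}$ when $\mu_n = 0$), so $\liminf_n g(x_n, \mu_n) \ge \liminf_n f(x_n) \ge f(x) = g(x, 0)$; if $x \notin \Omega$, then $x \in A$ (as $A$ is closed) forces $y_0 \notin \Phi(x)$, hence $d(y_0, \Phi(x)) > 0$ because $\Phi(x)$ is closed, so by continuity $d(y_0, \Phi(x_n))^2$ is bounded below by some $t_1 > 0$ for all large $n$, whence $\phi\big(d(y_0, \Phi(x_n))^2\big) \ge \phi(t_1) > 0$ by monotonicity; consequently $g(x_n, \mu_n) \to +\infty$ --- for indices with $\mu_n > 0$ because $\tfrac{1}{\mu_n}\phi(\cdot) \to +\infty$ and $f$ is bounded below on $A$, and for indices with $\mu_n = 0$ because then $x_n \notin \Omega$ and $g(x_n, 0) = +\infty$. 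Hence $g$ is lower semicontinuous on $A \times \mathbb{R}_+$. I expect this step --- specifically the case $x \notin \Omega$ --- to be the main technical obstacle; everything else is essentially a matching of hypotheses.

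For step~(ii), the set $\Omega = \{ x \in A \mid d(y_0, \Phi(x)) = 0 \}$ is closed (by assumption~3 and the closedness of the images of $\Phi$), hence compact, and $f$ is lower semicontinuous on it, so ($\mathcal{P}$) has a globally optimal solution. For step~(iii), let $x^*$ be any globally optimal solution of~($\mathcal{P}$) (so $x^* \in \dom f$); it is in particular a point of local minimum of~(\ref{MathProg}), $f$ is Lipschitz continuous near $x^*$ (assumption~1), $\Phi$ is metrically subregular with respect to $A$ at $(x^*, y_0)$ (assumption~2), and the linear lower bounds on $\phi$ and $\beta$ required in Theorem~\ref{Thm_LocExSmoothPenFunc} are exactly the remaining parts of assumptions~4 and~5; that theorem therefore yields the exactness of $F_{\lambda}$ at $x^*$. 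For step~(iv), $\lambda_0 = 0$ works: one has $F_0 = g \ge f$ on $A \times P$ (using $\phi \ge 0$ when $\mu > 0$, and $g(\cdot, 0) = f$ on $\Omega$, $+\infty$ off $\Omega$), and $f$ is bounded below on the compact set $A$ by lower semicontinuity, so $F_0$ is bounded below on $A \times P$. With (i)--(iv) established, Corollary~\ref{Crl_GlobExactCond} gives the global exactness of $F_{\lambda}$, as required.
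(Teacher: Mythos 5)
Your proposal is correct and follows essentially the same route as the paper: reduce everything to Corollary~\ref{Crl_GlobExactCond}, get exactness at each globally optimal solution from Theorem~\ref{Thm_LocExSmoothPenFunc}, and get boundedness below of $F_{\lambda_0}$ with $\lambda_0 = 0$ from the lower semicontinuity of $f$ on the compact set $A$. The only difference is in the one technical step, the lower semicontinuity of $g$ on $A \times \mathbb{R}_+$: you verify it directly by a sequential case analysis at points $(x,0)$ (distinguishing $x \in \Omega$ from $x \notin \Omega$), whereas the paper obtains it more compactly by representing $g$ as the supremum of the lower semicontinuous family $g_{\varepsilon}(x,\mu) = f(x) + \frac{1}{\mu + \varepsilon}\phi\bigl(d(y_0,\Phi(x))^2\bigr)$, $\varepsilon > 0$; both arguments are valid.
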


\begin{proof}
Applying Theorem~\ref{Thm_LocExSmoothPenFunc}, and taking into account the assumptions of the theorem one obtains that
the penalty function $F_{\lambda}$ is exact at every globally optimal solution of the problem ($\mathcal{P}$). Since $f$
is l.s.c. on $A$, and the set $A$ is compact, then $f$ is bounded below on this set. Hence the function $g(x, \mu)$ is
bounded below on $A \times \mathbb{R}_+$, which implies that the penalty function $F_{\lambda}$ is also bounded below on
$A \times \mathbb{R}_+$ for any $\lambda \ge 0$.

Let us show that the function $g(x, \mu)$ is l.s.c. on $A \times \mathbb{R}_+$. Then with the use of
Corollary~\ref{Crl_GlobExactCond} one obtains the desired result.

For any $\varepsilon > 0$ introduce the function
$$
  g_{\varepsilon}(x, \mu) = f(x) + \frac{1}{\mu + \varepsilon} \phi(d(y_0, \Phi(x))^2) \quad 
  (x, \mu) \in A \times \mathbb{R}_+.
$$
Taking into account the fact that the function $x \to d(y_0, \Phi(x))$ is continuous on $A$, and $\phi$ is l.s.c., one
gets that the function $x \to \phi(d(y_0, \Phi(x))^2)$ is l.s.c. on $A$ as well. Hence and from the lower semicontinuity
of $f$ it follows that the function $g_{\varepsilon}(x, \mu)$ is l.s.c. on $A \times \mathbb{R}_+$. Note that
$$
  g(x, \mu) = \sup_{\varepsilon > 0} g_{\varepsilon}(x, \mu) \quad \forall (x, \mu) \in A \times \mathbb{R}_+.
$$
Therefore the function $g(x, \mu)$ is l.s.c. on $A \times \mathbb{R}_+$ as the supremum of a family of
l.s.c. functions.  
\end{proof}

Theorem~\ref{Thm_LocExSmoothPenFunc} can be modified to the case of more general functions $g(x, \mu)$ and
$F_{\lambda}(x, \mu)$. In particular, let
$$
  g(x, \mu) = \begin{cases}
    f(x), & \text{ if } x \in \Omega, \mu = 0, \\
    + \infty, & \text{ if } x \notin \Omega, \mu = 0, \\
    f(x) + \frac{1}{\mu^{\alpha}} \phi( d(y_0, \Phi(x))^2 ), & \text{ if } \mu > 0.
  \end{cases}
$$
and
$$
  F_{\lambda}(x, \mu) = g(x, \mu) + \lambda \beta(\mu),
$$
where $\alpha > 0$ (cf.~\cite{LinWuYu, LiYu, LinLoxton}). The following result holds true.

\begin{theorem}
Let $x^* \in \dom f$ be a point local minimum of the problem (\ref{MathProg}), the function $f$ be Lipschitz continuous
near $x^*$, and the set-valued mapping $\Phi$ be metrically subregular with respect to the set $A$ at $(x^*, y_0)$. 
Suppose that the following assumptions are satisfied:
\begin{enumerate}
\item{$\phi(t) \ge \phi_0 t^{\gamma}$ for any $t \in [0, t_0]$, and for some $\phi_0 > 0$, $\gamma > 0$ and $t_0 > 0$;
}

\item{$\beta(\mu) \ge \beta_0 \mu^{\sigma}$ for any $\mu \in [0, \overline{\mu}]$, and for some $\beta_0 > 0$, 
$\sigma > 0$ and $\overline{\mu} > 0$.
}
\end{enumerate}
Suppose also that
$$
  \gamma > \frac{1}{2}, \quad \sigma \le \frac{\alpha}{2 \gamma - 1}.
$$
Then the penalty function $F_{\lambda}(x, \mu)$ is exact at $x^*$. 
\end{theorem}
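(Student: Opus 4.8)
The plan is to re-run the proof of Theorem~\ref{Thm_LocExSmoothPenFunc} almost verbatim; the only genuinely new ingredient is the bookkeeping of the three exponents $\alpha$, $\gamma$, $\sigma$. As there, fix $\rho>0$ with $f(x)\ge f(x^*)$ for all $x\in B(x^*,\rho)\cap\Omega$, and note that the Lipschitz lower bound $f(x)-f(x^*)\ge -L\,d(x,\Omega)$ for $x\in B(x^*,\delta)\setminus\Omega$ holds for some $\delta\in(0,\rho)$ by exactly the same argument (this step does not see $\alpha,\gamma,\sigma$). It then suffices to show that $F_\lambda(x,\mu)\ge F_\lambda(x^*,0)$ on a neighbourhood of $(x^*,0)$ in $A\times\mathbb{R}_+$ for all sufficiently large $\lambda$, i.e., that $F_\lambda$ is exact at $x^*$ (equivalently, by Proposition~\ref{PrpLocalExNSC}, that the problem ($\mathcal{P}_{\mu_0}$) with $\mu_0=0$ is $\beta$-calm at $x^*$).

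First I would derive the core one-sided estimate on $g$. Metric subregularity of $\Phi$ with respect to $A$ at $(x^*,y_0)$ gives a neighbourhood on which $d(y_0,\Phi(x))\ge a\,d(x,\Phi^{-1}(y_0)\cap A)=a\,d(x,\Omega)$; combined with assumption~1 and the monotonicity of $\phi$ and of $t\mapsto t^{\gamma}$, for $\mu>0$ and $x$ near $x^*$ with $d(y_0,\Phi(x))^2\le t_0$ one obtains
\[
  g(x,\mu)-g(x^*,0)\ \ge\ -L\,d(x,\Omega)+\frac{\phi_0 a^{2\gamma}}{\mu^{\alpha}}\,d(x,\Omega)^{2\gamma}.
\]
Now minimise the right-hand side over $t=d(x,\Omega)\in[0,+\infty)$. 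Here the hypothesis $\gamma>\tfrac12$ enters: since $2\gamma-1>0$, the map $t\mapsto -Lt+(\phi_0 a^{2\gamma}/\mu^{\alpha})t^{2\gamma}$ is coercive and attains its global minimum at the interior stationary point $t_*=\bigl(L\mu^{\alpha}/2\gamma\phi_0 a^{2\gamma}\bigr)^{1/(2\gamma-1)}$, with minimal value $-C\mu^{\alpha/(2\gamma-1)}$, where $C=\tfrac{(2\gamma-1)L}{2\gamma}\bigl(\tfrac{L}{2\gamma\phi_0 a^{2\gamma}}\bigr)^{1/(2\gamma-1)}$ depends only on $L,\phi_0,a,\gamma$ (and for $\alpha=\gamma=1$ it reduces to the constant $L^2/4\phi_0 a^2$ of (\ref{UpperEstimExPenParam})). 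Hence $g(x,\mu)-g(x^*,0)\ge -C\mu^{\alpha/(2\gamma-1)}$ whenever $d(y_0,\Phi(x))^2\le t_0$. The complementary region $d(y_0,\Phi(x))^2>t_0$ is handled by the monotonicity of $\phi$ alone: there $\phi(d(y_0,\Phi(x))^2)\ge\phi(t_0)\ge\phi_0 t_0^{\gamma}>0$, so $g(x,\mu)-g(x^*,0)\ge -L\delta+\phi_0 t_0^{\gamma}/\mu^{\alpha}\ge 0$ once $\mu$ is small, which again dominates $-C\mu^{\alpha/(2\gamma-1)}$. Thus there are $r>0$ and $\mu_1\in(0,\min\{\overline{\mu},1\}]$ with $g(x,\mu)-g(x^*,0)\ge -C\mu^{\alpha/(2\gamma-1)}$ for all $(x,\mu)\in(B(x^*,r)\cap A)\times(0,\mu_1]$; and for $\mu=0$ one has $g(x,0)\ge g(x^*,0)$ exactly as in Theorem~\ref{Thm_LocExSmoothPenFunc} (either $x\notin\Omega$ and $g(x,0)=+\infty$, or $x\in\Omega$ and $g(x,0)=f(x)\ge f(x^*)$).

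Finally I would feed this into the penalty term. By assumption~2, $\beta(\mu)\ge\beta_0\mu^{\sigma}$ on $[0,\overline{\mu}]$, and since $0<\mu\le\mu_1\le 1$ and $\sigma\le\alpha/(2\gamma-1)$ we get $\mu^{\alpha/(2\gamma-1)}\le\mu^{\sigma}$ --- this is the precise role of the inequality $\sigma\le\alpha/(2\gamma-1)$. Consequently, for every $\lambda\ge C/\beta_0$ and every $(x,\mu)$ in that neighbourhood, $F_\lambda(x,\mu)-F_\lambda(x^*,0)=\bigl(g(x,\mu)-g(x^*,0)\bigr)+\lambda\beta(\mu)\ge \lambda\beta_0\mu^{\sigma}-C\mu^{\alpha/(2\gamma-1)}\ge(\lambda\beta_0-C)\mu^{\sigma}\ge 0$, so $(x^*,0)$ is a point of local minimum of $F_\lambda$, i.e., $F_\lambda$ is exact at $x^*$ (with $\lambda(x^*)\le C/\beta_0$). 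The main difficulty is not conceptual but arithmetical: carrying the three exponents through the one-variable minimisation and recognising that $\gamma>\tfrac12$ and $\sigma\le\alpha/(2\gamma-1)$ are exactly the two inequalities needed --- the former so that the inner minimum is finite and scales like $\mu^{\alpha/(2\gamma-1)}$, the latter so that the penalty term $\lambda\beta_0\mu^{\sigma}$ can absorb it as $\mu\to 0^+$. A minor technical point is to keep the neighbourhood in $\mu$ within $[0,\min\{\overline{\mu},1\}]$ and to treat the set where $d(y_0,\Phi(x))^2>t_0$ separately, so that no continuity assumption on $x\mapsto d(y_0,\Phi(x))$ is needed.
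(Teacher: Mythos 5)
Your proposal is correct and follows exactly the route the paper intends: it reruns the proof of Theorem~\ref{Thm_LocExSmoothPenFunc} with the exponents $\alpha,\gamma,\sigma$ carried through, arriving at the bound $g(x,\mu)-g(x^*,0)\ge -\Theta\mu^{\alpha/(2\gamma-1)}$ and then absorbing it into $\lambda\beta_0\mu^{\sigma}$ via $\sigma\le\alpha/(2\gamma-1)$. The paper's own proof is only a two-line sketch deferring to Theorem~\ref{Thm_LocExSmoothPenFunc}; your write-up supplies the omitted computation (including the explicit constant $C$ and the separate treatment of the region $d(y_0,\Phi(x))^2>t_0$) and is consistent with it.
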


\begin{proof}
Arguing in the same way as in the proof of Theorem~\ref{Thm_LocExSmoothPenFunc} one can easily verify that there exists
$\Theta > 0$ such that 
$$
  g(x, \mu) - g(x^*, 0) \ge - \Theta \mu^{\frac{\alpha}{2 \gamma - 1}}	\quad
  \forall x \in B(x^*, r) \cap A \quad \forall \mu \ge 0,
$$
where $r > 0$ is sufficiently small. Then applying the assumption on the function $\beta$ one can check that 
$F_{\lambda}$ is exact at $x^*$, provided $\sigma \le \alpha / (2 \gamma - 1)$.  
\end{proof}

\bibliographystyle{abbrv}  
\bibliography{SmoothPenFunc_bibl}

\end{document}